\newtheorem{thm}{Theorem}
\newtheorem{defi}{Definition}
\title{Stable ample 2-vector bundles on Hirzebruch surfaces}
\author{by Alexandru Sterian}
\date{}
\theoremstyle{definition}
\theoremstyle{remark}
\newtheorem*{proof*}{Proof}
\begin{document}
\maketitle
\begin{abstract}
We discuss stability conditions for all rank-2 ample vector
bundles on Hirzebruch surfaces with the second Chern class less
than 7.
\end{abstract}

\begin{center}\textbf{AMS Mathematics Subject Classification:} Primary 14J60; Secondary 14D20,
14J26.\end{center}
\begin{center} Stable ample-2 vector
bundles\end{center}
\begin{center}\textbf{Key Words:}Rank 2-vector bundles,
Hirzebruch surface, stability conditions, Chern classes.
\end{center}

\section{Introduction}
Ample vector bundles with small Chern numbers on surfaces have
been studied by several authors; see \cite{BL}, \cite{LS},
\cite{N} and \cite{Hi}. Ishihara studied ample vector bundles of
rank 2 on a Hirzebruch surface. On the other hand, the notion of
stability has proved to be crucial in the construction of moduli
spaces of vector bundles with given numerical invariants.

In this paper, we discuss the stability of rank-2 ample vector
bundles on Hirzebruch surfaces with the second Chern class less
than 7. We mainly rely on a classification theorem obtained by
Ishihara \cite{Hi} and a stability criterion proved by Aprodu and
Br\^inz\u anescu \cite{A-Br 1}.
\newpage
\section{Hirzebruch surfaces}

We shall use classical notations and facts on
Hirzebruch surfaces, and we refer to \cite{Ha} or \cite{A-Br 1} for
more details. 
\newline Let $X=\mathbb{P}(\mathcal{O}_{\mathbb{P}^1}\oplus\mathcal{O}_{\mathbb{P}^1}(-e))\stackrel{\pi}{\rightarrow}\mathbb{P}^1$ be a Hirzebruch surface with invariant $e\geq 0$. Let $C_0$ be a section of $X$ with $\mathcal{O}_X(C_0)\cong\mathcal{O}_{\mathbb{P}(\mathcal{E})}(1)$, where $\mathcal{E}=\mathcal{O}_{\mathbb{P}^{1}}\oplus\mathcal{O}_{\mathbb{P}^1}(-e)$ and let $f_0$ be a fixed fibre. Then ${C_0}^2=-e$, $C_0f_0=1$ and ${f_0}^2=0$.
\newline Let $E$ be a rank-2 algebraic vector bundle on $X$ with fixed numerical Chern classes $c_1=(\alpha,\beta)\in H^2(X,\mathbb{Z})\cong\mathbb{Z}^2$ and $c_2=\gamma\in H^4(X,\mathbb{Z})\cong\mathbb{Z}$, where $\alpha, \beta, \gamma\in\mathbb{Z}$. Since the fibres of $\pi$ are isomorphic to $\mathbb{P}^1$, we can speak about the generic splitting type of $E$; for a general fibre $f$ of $\pi$, we have
$$E|_{f}\cong\mathcal{O}_f(d)\oplus\mathcal{O}_f(d'),$$
with $d'\leq d$ and $d+d'=\alpha$. We will call $d$ the first numerical invariant of $E$, and we define the second invariant of $E$ (see \cite{Br}) to be:
$$-r=inf\{\,l\,| \,\exists\,L\in Pic(\mathbb{P}^1),\, \deg{L}=l,\,\, s.t.\,\, H^0(X, E(-dC_0)\oplus\pi^*L)\neq 0\}.$$
Since $Pic_0(\mathbb{P}^1)$ is trivial, the following result (see \cite{Br}) takes the form:

\begin{thm}\label{constructie}(see \cite{Br})
For every $E$ rank-2 vector bundle on a Hirzebruch surface $X$, with
fixed Chern classes $c_1=(\alpha,
\beta)\in\mathbb{Z}\times\mathbb{Z},\,\,c_2\in\mathbb{Z}$ and
invariants $d$ and $r$, there exist
$Y\subset X$ a locally complete intersection of codimension 2 in
$X$, or the empty set, such that $E$ is given by the extension:

$$
0\rightarrow\mathcal{O}_X(dC_0+rf_0)\rightarrow
E\rightarrow\mathcal{O}_X(d'C_0+sf_0)\otimes
I_Y\rightarrow 0,\,\,\,(\bigstar)$$
where $d+d'=\alpha$, $r+s=\beta$ and
$\deg(Y)=c_2+\alpha(de-r)-\beta d+2dr-d^2e\geq 0$.
\end{thm}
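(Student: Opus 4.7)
The plan is to construct the extension $(\bigstar)$ directly from a non-zero section of a minimal twist of $E$, then identify the cokernel as a twisted ideal sheaf using the minimality of $r$, and finally extract $\deg(Y)$ from the second Chern class. The first step is straightforward: since $\mathrm{Pic}_0(\mathbb{P}^1)$ is trivial, the infimum in the definition of $-r$ is attained at $L = \mathcal{O}_{\mathbb{P}^1}(-r)$, so there exists a non-zero $\sigma \in H^0(X, E(-dC_0 - rf_0))$; equivalently, $\sigma$ gives an injection $\mathcal{O}_X(dC_0 + rf_0) \hookrightarrow E$ (injectivity follows from $E$ being torsion-free and $\sigma$ being non-zero at the generic point). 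Write $Q$ for the cokernel, which has rank $1$.

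The crucial step is to verify that $Q$ is torsion-free. I would let $T \subset Q$ be the torsion subsheaf, supported on an effective divisor $D$. Restricting to a general fibre $f \cong \mathbb{P}^1$ gives $E(-dC_0 - rf_0)|_f \cong \mathcal{O}_f \oplus \mathcal{O}_f(d'-d)$ with $d' - d \leq 0$, so $\sigma|_f$ factors through the $\mathcal{O}_f$-summand and $Q|_f \cong \mathcal{O}_f(d'-d)$ is torsion-free. Consequently $D$ meets no general fibre, and therefore $D$ is a union of fibres, a class of the form $bf_0$ with $b \geq 0$. If $b > 0$, saturating $\sigma$ along $D$ produces an injection $\mathcal{O}_X(dC_0 + (r+b)f_0) \hookrightarrow E$, hence a non-zero section of $E(-dC_0) \otimes \pi^* L$ for an $L$ of degree $-(r+b) < -r$, contradicting the minimality defining $r$. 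Hence $T = 0$ and $Q$ is torsion-free.

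Since $Q$ is a rank-$1$ torsion-free sheaf on a smooth surface, it has the form $M \otimes I_Y$ for some line bundle $M$ and a locally complete intersection $Y \subset X$ of codimension $2$ (or empty). Comparing first Chern classes in the sequence yields $M \cong \mathcal{O}_X(d'C_0 + sf_0)$ with $d' = \alpha - d$ and $s = \beta - r$, giving $(\bigstar)$. Then $c_2(E) = (dC_0 + rf_0) \cdot (d'C_0 + sf_0) + \deg(Y)$, and substituting the intersection numbers $C_0^2 = -e$, $C_0 f_0 = 1$, $f_0^2 = 0$ together with $d'$ and $s$ produces the stated formula for $\deg(Y)$; non-negativity is immediate from effectivity of $Y$. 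The main obstacle is the torsion-freeness of $Q$: it is what forces the use of the minimality of $r$ via saturation along the vertical torsion locus, and without it one would only obtain a sub-line-bundle of $E$ rather than the desired extension by a twisted ideal sheaf.
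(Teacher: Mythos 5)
Your argument is correct and is essentially the standard construction: the paper itself states this theorem without proof, quoting Br\^inz\u anescu \cite{Br}, and your route (a section of the minimal twist $E(-dC_0-rf_0)$, exclusion of a divisorial vanishing locus by restricting to a general fibre and by the minimality of $r$, identification of the torsion-free rank-1 quotient with $\mathcal{O}_X(d'C_0+sf_0)\otimes I_Y$, and the Chern-class computation for $\deg Y$) is exactly that argument. Two small points to tighten: the passage from ``$Q|_f$ torsion-free'' to ``$D$ misses a general fibre'' is cleaner if you simply note that $\sigma|_f$ is nowhere vanishing on a general fibre while $\sigma$ vanishes along $D$, and the fact that $Y$ is a locally complete intersection is not a general property of rank-1 torsion-free sheaves (it can fail for $M\otimes I_Y$ with $Y$ arbitrary) but follows here because, after removing the divisorial part, $\sigma$ is locally given by two functions without common factor, so $I_Y$ is locally generated by a regular sequence of length two.
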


\section{Ampleness on Hirzebruch surfaces}

The aim of this section is to present some generalities of ample
vector bundles on Hirzebruch surface (see \cite{Ha} and
\cite{Hi}).

Given a divisor $D=aC_0+bf_0$ on a Hirzebruch surface, it is well
known that $D$ is ample if and only if it is very ample, if and
only if $a>0$ and $b>ae$. We also know that a divisor
$D=a'C_0+b'f_0$ is effective if and only if $a'\geq 0$ and $b'\geq
0$.
\begin{defi}
Let $E$ be a vector bundle on $X$,
$\mathbb{P}(E)$ the associated projective bundle and
$H_{E}$ the tautological line bundle on
$\mathbb{P}(E)$. We say that $E$ is ample if
$H_{E}$ is ample.
\end{defi}

Ample rank-2 vector bundles on Hirzebruch surfaces
$X=\mathbb{P}(\mathcal{O}_{\mathbb{P}^1}\oplus\mathcal{O}_{\mathbb{P}^1}(-e))$
were classified  by Ishihara in \cite{Hi}. Since the decomposable bundles are not stable, we express the results of Ishihara only for indecomposables bundles:

\begin{thm}\label{caracterizare} (see \cite{Hi})
Let $X=\mathbb{P}(\mathcal{O}_{\mathbb{P}^1}\oplus\mathcal{O}_{\mathbb{P}^1}(-e))$ be a Hirzebruch surface and let $E$ be an indecomposable ample vector bundle of rank-2 on $X$. Then $c_2(E)\geq e+5$. Moreover, if $c_2(E)\leq e+6$ then one of the following cases occurs: 
\begin{enumerate}
\item $e=1$, $c_2=6$  and $E$ is given by the non-trivial extension 
$\newline 0\rightarrow \mathcal{O}(2C_0+2f_0)\rightarrow
E\rightarrow\mathcal{O}(C_0+3f_0)\rightarrow 0;$
\item $e=0$, $c_2=6$ and $E$ is given by the non-trivial extension
$\newline 0\rightarrow \mathcal{O}(2C_0)\rightarrow
E\rightarrow\mathcal{O}(C_0+3f_0)\rightarrow 0;$ 
\item $e=1$, $c_2=7$ and $E$ is given by the non-trivial extension
$\newline0\rightarrow \mathcal{O}(2C_0+f_0)\rightarrow
E\rightarrow\mathcal{O}(C_0+4f_0)\rightarrow 0;$
\item $e=2$, $c_2=8$ and $E$ is given by the non-trivial extension 
$\newline0\rightarrow \mathcal{O}(2C_0+4f_0)\rightarrow
E\rightarrow\mathcal{O}(C_0+4f_0)\rightarrow 0.$ 
\end{enumerate}
\end{thm}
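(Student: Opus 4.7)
\begin{proof*}
The plan is to combine the extension presentation $(\bigstar)$ from Theorem~\ref{constructie} with ampleness tested on the distinguished curves of $X$, and then finish by a short enumeration. I would first write $E$ as in $(\bigstar)$ with invariants $(d,d',r,s,Y)$, so that $\deg(Y) = c_2 + dd'e - ds - d'r \ge 0$, the indecomposability hypothesis translating into the extension being non-split. Restricting to a general fibre $f\cong\mathbb{P}^1$, ampleness of $E|_f = \mathcal{O}(d)\oplus\mathcal{O}(d')$ forces $d\ge d'\ge 1$, hence $\alpha\ge 2$.

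The first substantive step is to extract linear lower bounds on $r$ and $s$. Restricting $E$ to the negative section $C_0$ and (when $e>0$) to a section $C_\infty \sim C_0+ef_0$ disjoint from $C_0$, each restriction splits as a bundle on $\mathbb{P}^1$, and ampleness forces both summands to have degree $\ge 1$. Combined with the sub-line-bundle coming from $(\bigstar)$, this yields inequalities of the form $r \ge de+1$ and $s \ge d'e+1$, corrected (when $Y$ meets a section) by a nonnegative contribution. Plugging these into $\deg(Y)\ge 0$ and using $d,d'\ge 1$ gives the bound $c_2\ge e+5$ after a short computation.

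For the classification in the range $c_2\le e+6$, I would then solve the resulting linear system in the integer variables $(d,d',r,s,\deg Y)$. All inequalities saturate only when $d=2$, $d'=1$, and $\deg(Y)=0$, so the problem reduces to enumerating the finitely many allowed pairs $(r,s)$ for each admissible $e$; a direct case check produces exactly the four quadruples listed. For each of them the relevant $\mathrm{Ext}^1$ group is non-zero, so a non-split extension exists and the resulting $E$ is automatically indecomposable.

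The main obstacle is verifying that each candidate $E$ is actually ample: ampleness of the kernel and quotient line bundles in $(\bigstar)$ is necessary but not sufficient. One has to check ampleness on every irreducible curve, and the only potentially obstructing ones are the fibres, the section $C_0$, and, for $e>0$, $C_\infty$. For the chosen numerical data this is done curve by curve by computing the splitting type of each restriction and verifying strict positivity of both summands, which is essentially what forces $\deg(Y)=0$ in the saturated cases. Conversely, ruling out indecomposable ample candidates with $d\ge 3$ follows by pushing the same inequalities, which then become strictly incompatible with $c_2\le e+6$.
\end{proof*}
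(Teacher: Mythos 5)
First, note that the paper itself offers no proof of this statement: it is quoted verbatim from Ishihara \cite{Hi}, so you are attempting to reprove a genuinely nontrivial classification, and your sketch does not succeed. The decisive gap is the claimed lower bound $r \ge de+1$ (with its companion $s\ge d'e+1$) and everything built on it. Ampleness passes to quotients, not to subsheaves: the restriction of the sub-line bundle $\mathcal{O}_X(dC_0+rf_0)$ of $(\bigstar)$ to $C_0$ or to $C_\infty\sim C_0+ef_0$ is merely a subsheaf of the ample bundle $E|_{C}$, not a summand of its splitting, and a sub-line bundle of an ample bundle on $\mathbb{P}^1$ can have arbitrarily negative degree. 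Indeed your inequality is contradicted by three of the four cases of the very statement you are proving: in case 1, $r=2<de+1=3$; in case 3, $r=1<3$; in case 4, $r=4<5$. So the enumeration of $(r,s)$ based on these bounds would exclude exactly the bundles it is supposed to produce, and the ``short computation'' yielding $c_2\ge e+5$ does not exist. The inequalities one can legitimately extract from your restrictions (namely $d\ge d'\ge 1$ from the fibres, positivity of the quotient line bundle on curves avoiding $Y$, $c_1(E)\cdot C_0\ge 2$, and $\deg Y = c_2+dd'e-ds-d'r\ge 0$) give only something like $c_2\ge 3$ when $e=0$, far short of $e+5$; obtaining the sharp bound and the finite list is precisely the hard content of Ishihara's theorem and requires tools beyond fibre and section restrictions.

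Two further steps would also need repair even if the numerics were fixed. A non-split extension does not automatically produce an indecomposable bundle (on $\mathbb{P}^1$ the non-split extension $0\to\mathcal{O}(-2)\to\mathcal{O}(-1)^{\oplus 2}\to\mathcal{O}\to 0$ has decomposable middle term), so indecomposability of each candidate must be argued separately. And ampleness of a rank-2 bundle on a surface is not certified by checking its restrictions to the fibres, $C_0$ and $C_\infty$: ample restriction to every curve is a necessary condition, but the finite-curve criterion you invoke is not a theorem, so the ampleness of the four candidates must either be established through $\mathcal{O}_{\mathbb{P}(E)}(1)$ and a Nakai--Moishezon type analysis on $\mathbb{P}(E)$, or simply quoted from \cite{Hi} as the paper does.
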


\section{Stability}
In this section we recall some facts concerning the stability in
the sense of Mumford-Takemoto of vector bundles on the Hirzebruch
surfaces $X$. For more details we refer to \cite{A-Br 1},
\cite{MR} and \cite{Q2}.
\begin{defi}
Let $H$ be an ample line bundle on a smooth projective surface
$X$. For a torsion free sheaf $E$ on $X$ we set:
$$\mu(E)=\mu_H(E):=\frac{c_1(E)H}{rk(E)}.$$
The sheaf $E$ is called semistable with respect to $H$ if
$$\mu_H(G)\leq\mu_H(E)$$ for all non-zero subsheaves $G\subset E$
with $rk(G)< rk(E)$. If strict inequality holds, then $E$ is said
to be stable with respect to $H$.
\end{defi}
We shall also use the description of Qin (see \cite{Q1} or \cite{Fr}) for walls
and chambers.
\newline A very useful criterion for stability in the case of rank-2 vector bundles over ruled surfaces was found by Aprodu and Br\^inzanescu in \cite{A-Br 1}. We remind now this criterion in the case of Hirzebruch surfaces,
which will be used later.
\begin{thm}\label{Aprodu}(see \cite{A-Br 1})
Let $E$ be a rank two vector bundle over a Hirzebruch surface, with
numerical invariants as in Theorem \ref{constructie}. Then, there
exists an ample line bundle $H$ such that $E$ is $H$ stable if and
only if $2r<\beta$ and the extension
($\bigstar$) of $E$ is non-splitting.
\end{thm}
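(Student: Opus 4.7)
The plan is to control stability via the slope inequality for saturated rank-one subsheaves $M \hookrightarrow E$. Any such $M$ is a line bundle $\mathcal{O}_X(pC_0+qf_0)$, and for an ample $H=aC_0+bf_0$ (so $a>0$, $b>ae$) the inequality $\mu_H(M)<\mu_H(E)$ rewrites, using $C_0^2=-e$, $C_0f_0=1$, $f_0^2=0$, as
\[
(2p-\alpha)(b-ae)+(2q-\beta)a<0.
\]
Both directions then reduce to determining which pairs $(p,q)$ can actually arise.

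For necessity, suppose $E$ is $H$-stable for some ample $H$. If $(\bigstar)$ splits then $\mathcal{O}_X(d'C_0+sf_0)\otimes I_Y$ must be locally free, forcing $Y=\emptyset$, and $E=L_1\oplus L_2$ is decomposable, hence unstable. So $(\bigstar)$ is non-split. Plugging $M=L_1=\mathcal{O}_X(dC_0+rf_0)$ into the inequality gives $(d-d')(b-ae)+(2r-\beta)a<0$; since $d\ge d'$, $a>0$ and $b-ae>0$, this forces $2r<\beta$.

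For sufficiency, assume $2r<\beta$ and $(\bigstar)$ is non-split. I classify $M$ via the composition $M\to E\to \mathcal{O}_X(d'C_0+sf_0)\otimes I_Y$: if it vanishes, $M$ is saturated inside $L_1$ and equals $L_1$, and more generally the infimum defining $r$ forces any injection with $p=d$ to satisfy $q\le r$. If the composition is nonzero, then $M\hookrightarrow L_2=\mathcal{O}_X(d'C_0+sf_0)$, giving $p\le d'$ and $q\le s$. The key step is ruling out the boundary pair $(p,q)=(d',s)$: a copy of $L_2$ inside $E$ would yield a nonzero element of $\mathrm{Hom}(L_2,L_2\otimes I_Y)=H^0(I_Y)$, which forces $Y=\emptyset$ and then provides a splitting of $(\bigstar)$, contradicting our hypothesis.

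Once $(d',s)$ is excluded, the remaining slope inequalities reduce to an upper bound on the ratio $(b-ae)/a$ coming from $(p,q)=(d,r)$ (vacuous when $d=d'$, where $2r<\beta$ suffices) together with strictly weaker lower bounds from the cases $p=d'$ with $q\le s-1$ and $p\le d'-1$. A short computation shows these inequalities define a non-empty open interval for $(b-ae)/a$; selecting any rational value in it and clearing denominators yields an integral ample $H$ for which $E$ is stable. The main obstacle is precisely the exclusion of $(d',s)$: without the non-splitting assumption, the $L_1$ upper bound and the $L_2$ lower bound on $(b-ae)/a$ would coincide and no stability chamber could exist.
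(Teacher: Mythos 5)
A preliminary remark: the paper itself gives no proof of this theorem --- it is quoted from Aprodu--Br\^inz\u anescu \cite{A-Br 1} --- so your argument has to be judged on its own merits rather than against an in-paper proof. Your route is the natural direct one and its skeleton is correct: reduction to saturated line subsheaves $\mathcal{O}_X(pC_0+qf_0)$, the rewriting of $\mu_H(M)<\mu_H(E)$ as $(2p-\alpha)(b-ae)+(2q-\beta)a<0$, necessity via the subsheaf $L_1=\mathcal{O}_X(dC_0+rf_0)$ together with the observation that a split $(\bigstar)$ forces $Y=\emptyset$ and a decomposable (hence never stable) $E$, and, for sufficiency, the two-branch analysis through the composition $M\to E\to L_2\otimes I_Y$ with the exclusion of $(p,q)=(d',s)$ via $H^0(I_Y)$ and non-splitting.

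The one step that fails as written is the final tally in the sufficiency direction when $d=d'$. In that case the pairs $p=d'$, $q\le s-1$ coming from the nonzero-composition branch do not produce ``strictly weaker lower bounds'' on $t=(b-ae)/a$: the coefficient $2p-\alpha$ vanishes, so the slope condition becomes the $t$-independent inequality $2q<\beta$, which is violated already by $q=s-1$ as soon as $s\ge r+2$ (then $2(s-1)-\beta=s-r-2\ge 0$). If such subsheaves could occur, no polarization whatsoever would work and your claimed open interval would be empty. They cannot occur, but only because of the fact you state in the vanishing-composition discussion --- any injection $\mathcal{O}_X(dC_0+qf_0)\hookrightarrow E$ forces $q\le r$ by the definition of the invariant $r$ --- applied inside the second branch as well, since $p=d'$ means $p=d$ when $d=d'$. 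You need to intersect the constraints of that branch with this bound; with that correction the interval is indeed non-empty (for $d>d'$ one needs $\max\{(s-r-2)/(d-d'),\,(s-r)/(d-d'+2),\,0\}<t<(s-r)/(d-d')$, recalling $\beta-2r=s-r$, and for $d=d'$ any $t>(s-r)/2$ works), and the rest of your argument, including the necessity direction and the exclusion of $(d',s)$, stands.
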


\section{Main Result}
Using the tools presented in the above sections we are now able to
state and prove our main result:
\begin{thm}\label{prima}
Let
$X\cong\mathbb{P}(\mathcal{O}_{\mathbb{P}^1}\oplus\mathcal{O}_{\mathbb{P}^1}(-e))$
be a Hirzebruch surface. Then, the only rank-2 ample  stable
vector bundles on $X$ with $c_2(E)\leq 6$ or $c_2(E)\leq 7$ and
$e\geq 1$ are given by the following exact sequences:
\begin{enumerate}
\item $0\rightarrow \mathcal{O}(2C_0+2f_0)\rightarrow
E\rightarrow\mathcal{O}(C_0+3f_0)\rightarrow 0$ for $e=1,$
$(c_2(E)=6)$;

\item $0\rightarrow \mathcal{O}(2C_0)\rightarrow
E\rightarrow\mathcal{O}(C_0+3f_0)\rightarrow 0$ for $e=0$
$(c_2(E)=6)$.

\item $0\rightarrow \mathcal{O}(2C_0+f_0)\rightarrow
E\rightarrow\mathcal{O}(C_0+4f_0)\rightarrow 0$ for $e=1$
$(c_2(E)=7)$.

\end{enumerate}
\end{thm}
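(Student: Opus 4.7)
The plan is to invoke Theorem~\ref{caracterizare} and Theorem~\ref{Aprodu} in tandem. Since a decomposable rank-2 bundle $L_1\oplus L_2$ always has a line-bundle summand of slope at least $\mu(E)$, it is never slope-stable; hence any stable rank-2 ample bundle is indecomposable, and the classification in Theorem~\ref{caracterizare} supplies the full list of candidates to test.

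The enumeration step is purely arithmetic. Ishihara's bound $c_2\geq e+5$, combined with the hypothesis ``$c_2\leq 6$, or $c_2\leq 7$ with $e\geq 1$'', narrows the admissible pairs $(e,c_2)$ to $(0,5),(0,6),(1,6),(1,7),(2,7)$, all of which also satisfy $c_2\leq e+6$. Comparing with the four explicit items in Theorem~\ref{caracterizare}, the pairs $(0,5)$ and $(2,7)$ are absent from that list and thus admit no indecomposable ample bundle, whereas $(0,6),(1,6),(1,7)$ correspond respectively to the three extensions displayed in the statement.

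For each of these three candidates I would read off the invariants $d$ and $r$ of Theorem~\ref{constructie} directly from the shape of the extension (the first summand being $\OO(dC_0+rf_0)$) and apply Theorem~\ref{Aprodu}: $E$ is stable with respect to some ample $H$ iff $2r<\beta$ and the extension is non-splitting. Non-splitting is already asserted in Theorem~\ref{caracterizare}, and the numerical inequality reduces to the one-line checks $2\cdot 2<5$, $2\cdot 0<3$, and $2\cdot 1<5$ in cases (1), (2), (3) respectively. Ampleness of each $E$ is already built into Ishihara's setup, so all three bundles are simultaneously ample and stable.

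The only step that requires actual thought is justifying that the first summand in each Ishihara extension is the \emph{canonical} $\OO(dC_0+rf_0)$ of Theorem~\ref{constructie}, and not some sub-line-bundle associated to a smaller $r_1<r$ that would invalidate the Aprodu--Br\^inz\u anescu inequality. I expect this to follow from comparing the displayed $d$ with the generic splitting degree (immediate since $2\geq 1$ on a fibre in every case) and from a short cohomology chase on the given extension showing that $-r$ is indeed the infimum in the definition of the second invariant, with non-splitting ruling out any smaller twist. This is the single genuinely content-bearing step; once it is settled, the rest of the argument is bookkeeping.
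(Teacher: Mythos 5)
Your proposal is correct and takes essentially the same route as the paper: restrict Ishihara's list via the bound $c_2\geq e+5$, identify each displayed extension with the canonical extension $(\bigstar)$ of Theorem~\ref{constructie}, and apply the Aprodu--Br\^inz\u anescu criterion $2r<\beta$ together with non-splitting. The step you leave as a sketch is exactly what the paper carries out: $d$ is read off by restricting to a general fibre, and $r$ is pinned down by showing $H^0(X,E(-dC_0-(r+m)f_0))=0$ for all $m\geq 1$ via the projection formula (in particular $\pi_*\mathcal{O}_X(-C_0)=0$), a vanishing that needs no appeal to non-splitting, which enters only through the stability criterion itself.
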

\begin{proof} According to Theorem \ref{caracterizare} (or see \cite{Hi}), there are four cases to discuss.
\newline Following the idea of the proof of Remark 1 in \cite{Br}, it is easy to see that all the four extensions from Theorem \ref{caracterizare} coincide with the extension ($\bigstar$) of the corresponding 2-vector bundles $E$'s. We show this fact only for the first case, the arguments used in the proofs of other cases being quite similar.
\newline Case A. The ample rank-2 vector bundles $E$
constructed from the extension 
\begin{equation}\label{eq1}
0\rightarrow
\mathcal{O}(2C_0+2f_0)\rightarrow
E\rightarrow\mathcal{O}(C_0+3f_0)\rightarrow 0,
\end{equation} are
non-splitting, since $Ext^1(\mathcal{O}(C_0+3f_0),
\mathcal{O}(2C_0+2f_0))\neq 0$. In this case, we know that $e=1$,
$c_1(E)=3C_0+5f_0$ and $c_2(E)=6$. Obviously, $\alpha=3, \beta=5$ and $\gamma=6$. By restricting the exact sequence (\ref{eq1}) to a general fibre $f$ we obtain
$$0\longrightarrow\mathcal{O}_{f}(2)\longrightarrow E|_f\longrightarrow\mathcal{O}_f(1)\longrightarrow 0,$$
and since $H^1(f,\mathcal{O}_f(1))=0$, it follows that $E|_f=\mathcal{O}_f(2)\oplus\mathcal{O}_f(1)$, and so using the notations from
Theorem \ref{constructie}, we get $d=2$ and $d'=1$.
\newline Considering the equivalent form of (\ref{eq1}):
\begin{equation}\label{eq2}
0\longrightarrow\mathcal{O}_X\longrightarrow E(-2C_0-2f_0)\longrightarrow\mathcal{O}_X(-C_0+f_0)\longrightarrow 0
\end{equation}
and looking to the corresponding long exact sequence of cohomology we obtain the injective map:
$$0\longrightarrow H^0(X,\mathcal{O}_X)\longrightarrow H^0(X,E(-2C_0-2f_0)).$$
Thus, $H^0(X,E(-2C_0-2f_0))\neq 0$.
To show that the second invariant  of $E$ is $r=2$, we must verify that $H^0(X,E(-2C_0-(2+m)f_0))=0$ for every $m\geq 1$.
Tensorising (\ref{eq2}) by $\mathcal{O}_X(-mf_0)$ it follows:
\newline $0\longrightarrow H^0(X,\mathcal{O}_X(-mf_0))\longrightarrow H^0(X,E(-2C_0-(2+m)f_0))\longrightarrow$
\newline $\longrightarrow H^0(X,\mathcal{O}_X(-C_0+(1-m)f_0))\longrightarrow ...\,\,.$
\newline By the projection formula:
$$H^0(X,\mathcal{O}_X(-C_0+(1-m)f_0))\cong H^0(\mathbb{P}^1,\pi_*(\mathcal{O}_X(-C_0))\otimes\mathcal{O}_{\mathbb{P}^1}(1-m)f_0))=0,$$ since
$\pi_*(\mathcal{O}_X(-C_0))=0.$
\newline Therefore, $$H^0(X,E(-2C_0-(2+m)f_0))\cong H^0(X,\mathcal{O}_X(-mf_0))\cong H^0(\mathbb{P}^1,\mathcal{O}_{\mathbb{P}^1}(-m))=0,$$ since $m\geq 1$.
We can deduce now that $r=2$ and $s=3$.
\newline Moreover, for all these invariants it follows that $\deg Y=0$, and thus $Y=\emptyset$.
\newline We conclude that $E$ sits in an extension of type $(\bigstar)$.
\newline According to Theorem \ref{Aprodu}, $E$ will be stable if and only
if will verify:$$2r<\beta, $$
 which in our case  this give
$4<5$. Hence $E$ est stable.
\newline We will proceed in the same way with the other 3 cases who left.
\newline Case B. Consider the extension:
\begin{equation}\label{eq4}0\rightarrow \mathcal{O}(2C_0)\rightarrow
E\rightarrow\mathcal{O}(C_0+3f_0)\rightarrow 0.
\end{equation} In this
case:$$e=0,\,\,c_1(E)=3C_0+3f_0,\,\,c_2(E)=6$$ and the
coefficients are:$$d=2,\,\,r=0,\,\,
d'=1,\,\,s=3,\,\,\alpha=3,\,\,\beta=3.$$ An easy computation shows
that $Ext^1(\mathcal{O}(C_0+3f_0), \mathcal{O}(2C_0))\neq 0$, so
there are non-splitt bundles given by sequence (\ref{eq4}). Since
$2r=0$ and $\beta=3$, it follows that $E$ is stable.
\newline Case C. For  $E$ gived by the non-trivial extension:
$$ 0\rightarrow \mathcal{O}(2C_0+f_0)\rightarrow
E\rightarrow\mathcal{O}(C_0+4f_0)\rightarrow 0,$$
we have:
$$e=1,\,\,c_1(E)=3C_0+5f_0,\,\,c_2(E)=7,$$$$d=2,\,\,r=1,\,\,
d'=1,\,\,s=4,\,\,\alpha=3,\,\,\beta=5.$$
Again, the inequality  $2r<\beta$ holds, and so $E$ is stable.
\newline Case D. In the last case $E$ is given by the
non-splitting extension: $$0\rightarrow
\mathcal{O}(2C_0+4f_0)\rightarrow
E\rightarrow\mathcal{O}(C_0+4f_0)\rightarrow 0.$$ For this
case
$$e=2,\,\,c_1(E)=3C_0+8f_0,\,\,c_2(E)=8,$$$$d=2,\,\,r=4,\,\,d'=1,\,\,s=4,\,\,\alpha=3,\,\,\beta=8.$$
Since $2r=\beta=8$, we conclude that
$E$ is not stable.
\end{proof}
\smallskip The next result describe the polarizations for which the ample rank-2 vector bundles from the above theorem are stable.
\begin{thm}
Let
$X\cong\mathbb{P}(\mathcal{O}_{\mathbb{P}^1}\oplus\mathcal{O}_{\mathbb{P}^1}(-e))$
be a Hirzebruch surface and $H=aC_0+bf_0$ an ample line bundle.
\begin{enumerate} \item If $e=1$ and $a<b<2a$, then any ample
rank-2 vector bundle $E$ given by the non-trivial
extension:$$0\rightarrow \mathcal{O}(2C_0+2f_0)\rightarrow
E\rightarrow\mathcal{O}(C_0+3f_0)\rightarrow 0$$ is $H$ stable.
 \item If $e=0$ and $b<3a$, then any ample rank-2 vector
bundle $E$ given by the non-trivial extension $$0\rightarrow
\mathcal{O}(2C_0)\rightarrow
E\rightarrow\mathcal{O}(C_0+3f_0)\rightarrow 0$$ is $H$ stable.
\item If $e=1$ and $a<b<4a$, then any ample rank-2 vector bundle
$E$ given by the non-trivial extension
$$0\rightarrow \mathcal{O}(2C_0+f_0)\rightarrow
E\rightarrow\mathcal{O}(C_0+4f_0)\rightarrow 0;$$ is $H$ stable.
\end{enumerate}
\end{thm}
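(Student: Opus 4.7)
The plan is to verify Mumford--Takemoto $H$-stability of $E$ directly for $H=aC_0+bf_0$ in the stated region, by showing that no rank-one subsheaf $M\subset E$ destabilises $E$. From the extension $0\to L_1\to E\to L_2\to 0$, every saturated rank-one subsheaf of $E$ is either $L_1$ itself or injects into $L_2$; in the latter case it has the shape $L_2(-D)$ for some effective divisor $D=d_1C_0+d_2f_0$, which is non-zero since the extension does not split.

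The $L_1$ branch is a direct intersection-number calculation. Using $C_0^2=-e$, $C_0\cdot f_0=1$ and $f_0^2=0$, one writes out $L_1\cdot H$ and $c_1(E)\cdot H$ in each of the three cases; the inequality $2\,L_1\cdot H<c_1(E)\cdot H$ reproduces exactly the upper bound of the hypothesis, namely $b<2a$, $b<3a$ and $b<4a$ respectively (equivalently, this is the Aprodu--Br\^inz\u anescu inequality $2r<\beta$ evaluated against the chosen $H$). For the $L_2(-D)$ branch, $\mu_H(L_2(-D))<\mu_H(E)$ becomes $2\,D\cdot H>(L_2-L_1)\cdot H$, i.e.\ $2(d_1(b-ae)+d_2a)>(L_2-L_1)\cdot H$. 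Since $D\cdot H\geq\min(a,\,b-ae)$ whenever $D\neq 0$ is effective, testing the inequality at the extremal choices $D=f_0$ and $D=C_0$ reads off the remaining (lower) bound of the stated range.

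The main obstacle is that a few small $D$'s produce classes $M=L_2(-D)$ which appear to destabilise $E$ inside part of the claimed chamber; these must be excluded as line subbundles of $E$. I would handle them with Bogomolov's inequality $(2c_1(M)-c_1(E))^2\geq c_1(E)^2-4c_2(E)$: for example, in case~1 the would-be destabiliser $M=3f_0$ gives $\xi=-3C_0+f_0$ with $\xi^2=-15<-3=c_1(E)^2-4c_2(E)$, so $M$ cannot occur as a line subbundle of any rank-two $E$ with these Chern classes. An analogous tabulation clears the remaining borderline classes in the other two cases; alternatively, one may enumerate Qin's walls in the ample cone, verify that the region in the statement is contained in a single chamber, and invoke Theorem~\ref{Aprodu} to propagate stability from one ample polarisation throughout the chamber.
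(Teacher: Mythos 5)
Your strategy (a direct slope check on the two branches of the extension, plus a discriminant/Bogomolov tabulation to exclude borderline sub-line bundles) is genuinely different from the paper's proof, which simply quotes Qin's Theorem 1.2.3: the paper sets $\zeta=2G-c_1$ for the sub-line bundle $G$ of the extension and checks $H\cdot\zeta<0$, obtaining $b<2a$, $b<3a$, $b<4a$ in the three cases. Your plan does work in case 1: there $c_1^2-4c_2=-3$, any numerically destabilising class of the second branch must be $L_2-D$ with $D\geq C_0$, and all of these have $(2G'-c_1)^2\leq -15<-3$, so the tabulation clears everything; moreover $a<b<2a$ is in fact a single chamber, so the fallback argument also works there.

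The gap is in cases 2 and 3, where the claimed ``analogous tabulation'' does not clear the remaining classes. In case 2 one has $c_1^2-4c_2=-6$: the class $G'=C_0+2f_0$ gives $\xi=-C_0+f_0$ with $\xi^2=-2\geq -6$ and destabilises whenever $b\leq a$, and $G'=2f_0$ gives $\xi=-3C_0+f_0$ with $\xi^2=-6\geq -6$ and destabilises whenever $3b\leq a$; both regimes are allowed by the hypothesis $0<b<3a$, so Bogomolov excludes neither. Worse, $\mathcal{O}(2f_0)$ is \emph{always} a subsheaf of $E$: twisting the extension gives $0\to\mathcal{O}(2C_0-2f_0)\to E(-2f_0)\to\mathcal{O}(C_0+f_0)\to 0$ with $h^0(\mathcal{O}(C_0+f_0))=4>3=h^1(\mathcal{O}(2C_0-2f_0))$, hence $h^0(E(-2f_0))\neq 0$; so no tabulation whatsoever can clear this class, and the direct method cannot produce stability when $a\geq 3b$. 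In case 3 ($c_1^2-4c_2=-7$) the class $G'=C_0+3f_0$ gives $\xi=-C_0+f_0$, $\xi^2=-3\geq-7$, and destabilises for $b\leq 2a$, inside the stated range $a<b<4a$; decomposing $\mathrm{Ext}^1(\mathcal{O}(C_0+4f_0),\mathcal{O}(2C_0+f_0))\cong H^1(\mathbb{P}^1,\mathcal{O}(-3))\oplus H^1(\mathbb{P}^1,\mathcal{O}(-4))$ over the base, one sees there are nonsplit extension classes for which $\mathcal{O}(C_0+3f_0)$ does embed in $E$, so non-splitting plus Chern-class numerics cannot exclude it — the unused hypothesis you would need is the ampleness of $E$. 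For the same reason your alternative route fails: the stated regions are not contained in single chambers (the walls $b=a$ and $3b=a$ cross the case-2 region, and $b=2a$ crosses the case-3 region), so stability cannot simply be propagated from one polarisation. As written, then, the proposal proves only case 1; to finish cases 2 and 3 you must either shrink $H$ to the chamber adjacent to the wall determined by $2L_1-c_1$, or make essential use of the ampleness of $E$ to rule out the extra sub-line bundles — a subtlety that the paper's own one-line appeal to Qin's theorem (which verifies only $H\cdot\zeta<0$) also leaves untreated.
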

\begin{proof}According to Theorem 1.2.3
from Qin (see \cite{Q1}), we know that if $E$ is a rank-2 vector
bundle given by a non-trivial extension of the form
$$0\rightarrow\mathcal{O}_X(G)\rightarrow
E\rightarrow\mathcal{O}_X(c_1-G)\otimes I_Y\rightarrow 0,$$ and if
we consider the wall $\zeta:=2G-c_1$ then:
\newline $E$ will be $H$-stable for every ample line bundle $H$
who verify $H.\zeta<0$;
\begin{enumerate}
\item In this case $\zeta=C_0-f_0$ , so
$$H.\zeta<0\Longleftrightarrow
(aC_0+bf_0)(C_0-f_0)<0\Longleftrightarrow$$$$
a{C_0}^2-aC_0f_0+bf_0C_0-b{f_0}^2<0\Longleftrightarrow -2a+b<0,$$
$$\,\,\textrm{since}\,\,C_0^2=-1,\,\,C_0f_0=1\,\,\textrm{and}\,\,{f_0}^2=0.$$
\item For the second case $\zeta=C_0-3f_0$ and hence:
$$H.\zeta<0\Longleftrightarrow
(aC_0+bf_0)(C_0-3f_0)<0\Longleftrightarrow$$
$$a{C_0}^2-3aC_0f_0+bf_0C_0-3b{f_0}^2<0
\Longleftrightarrow
-3a+b<0,$$$$\,\,\textrm{since}\,\,C_0^2=0,\,\,C_0f_0=1\,\,\textrm{and}\,\,{f_0}^2=0.$$
\item In the third case $\zeta=C_0-3f_0$, so we get:
\newline$$H.\zeta<0\Longleftrightarrow
(aC_0+bf_0)(C_0-3f_0)<0\Longleftrightarrow$$
$$a{C_0}^2-3aC_0f_0+bf_0C_0-3b{f_0}^2<0\Longleftrightarrow
-4a+b<0,$$$$\,\,\textrm{since}\,\,C_0^2=-1,\,\,C_0f_0=1\,\,\textrm{and}\,\,{f_0}^2=0.$$
\end{enumerate}
\end{proof}
In the next paragraph we verify directly the stability for the
first case of vector bundles from Theorem 5 with respect to a
fixed polarization:
\newline\textbf{Example.}\,\,\,Let $X$ be a Hirzebruch surface with
$e=1$, and $H=2C_0+3f_0$ an ample line bundle on $X$. We will
check directly that the nonsplit ample rank-2 vector bundles $E$
on $X$ which sit in the nontrivial extension:
$$0\rightarrow \mathcal{O}(2C_0+2f_0)\rightarrow
E\rightarrow\mathcal{O}(C_0+3f_0)\rightarrow 0\,\,(\dag)$$ are
$\mu$ stable with respect to $H$. That is, for any rank 1 subbundle
$\mathcal{O}_X(D)$ of $E$ we have
$$c_1(\mathcal{O}_X(D))H<\frac{c_1(E)H}{2}=\frac{(3C_0+5f_0)(2C_0+3f_0)}{2}=\frac{13}{2}.$$
Since $E$ is given by the exact sequence ($\dag$), we have:
\begin{enumerate}
\item $\mathcal{O}_X(D)\hookrightarrow\mathcal{O}(2C_0+2f_0)$ or
\item $\mathcal{O}_X(D)\hookrightarrow\mathcal{O}(C_0+3f_0)$.
\end{enumerate}
In the first case $(2C_0+2f_0)-D\geq 0$ (i.e. effective divisor).
Since $H$ is an ample line bundle, it's clear that
$(2C_0+2f_0-D)H\geq 0$ and hence we have
$c_1(\mathcal{O}_X(D))H=DH\leq(2C_0+2f_0)H=(2C_0+2f_0)(2C_0+3f_0)=6<\frac{13}{2}=\frac{c_1(E)H}{2}$.
\newline In the second case, we consider $D=\alpha C_0+\beta f_0$.
\newline Since $\mathcal{O}_X(D)\hookrightarrow\mathcal{O}(C_0+3f_0)$,
it results that $(C_0+3f_0)-D=(1-\alpha)C_0+(3-\beta)$ is an
effective divisor, and we get $\alpha\leq 1$ and $\beta\leq 3$.
But since $E$ is given by an extension which does not split, this
implies that $(\alpha,\beta)\neq(1,3)$. \newline In conclusion, we
have: $$(\alpha C_0+\beta f_0)(2C_0+3f_0)=\alpha+
2\beta<\frac{13}{2}=\frac{c_1(E)H}{2}.$$
\textbf{Acknowledgements.} The author expresses his thanks to Vasile Br\^inz\u anescu for the subject of this note and to the referee for helpful suggestions.

\begin{tabbing}
\hspace*{8cm} \= \hspace*{2cm} \= \hspace*{3cm} \= \hspace*{4cm}
\kill

\parbox{\textwidth}{Alexandru Sterian}\\
\parbox{\textwidth}{University "Spiru Haret"}\\
\parbox{\textwidth}{Department of Mathematics}\\
\parbox{\textwidth}{Str. Ion Ghica, 13, 030045}\\
\parbox{\textwidth}{Bucharest, Romania}\\
\parbox{\textwidth}{alexandru.sterian@gmail.com}\\
\end{tabbing}

\end{document}